\newtheorem{theorem}{Theorem}[section]
\newtheorem{corollary}[theorem]{Corollary}
\newtheorem{example}[theorem]{Example}
\newtheorem{lemma}[theorem]{Lemma}
\newtheorem{proposition}[theorem]{Proposition}
\newenvironment{proof}[1][Proof]{\noindent\textbf{#1.} }{\ \rule{0.5em}{0.5em}}
\begin{document}

\title{Unitization of a lattice ordered ring with a truncation}
\author{Karim Boulabiar\thanks{Corresponding author}\quad and\quad Mounir
Mahfoudhi\bigskip\\Universit\'{e} de Tunis El Manar, Facult\'{e} des Sciences de Tunis\\LR11ES12 Alg\`{e}bre, Topologie, Arithm\'{e}tique et Ordre\\2092, Tunis, Tunisie}
\date{}
\maketitle

\begin{abstract}
Let $R$ be a lattice ordered ring along with a truncation in the sense of
Ball. We give a necessary and sufficient condition on $R$ for its unitization
$R\oplus\mathbb{Q}$ to be again a lattice ordered ring. Also, we shall see
that $R\oplus\mathbb{Q}$ is a lattice ordered ring for at most one truncation.
Particular attention will be paid to the Archimedean case. More precisely, we
shall identify the unique truncation on an Archimedean $\ell$-ring $R$ which
makes $R\oplus\mathbb{Q}$ into a lattice ordered ring.

\end{abstract}

\noindent{\small Mathematics Subject Classification. 06F25;97H50}

\noindent{\small Keywords., Alexandroff unitization, Archimedean, lattice
ordered group, lattice ordered ring, orthomorphism, Stone \textit{f-}ring,
truncation, truncation unit, lattice homomorphism, unitization, weak unit}

\section{Introduction}

Where reference is made to an $\ell$-group, it shall always mean a divisible
abelian lattice-ordered group (i.e., a vector lattice over the rationals
$\mathbb{Q}$). Recently in his pioneer paper \cite{B2014}, Ball defined a
\textsl{truncation }on an $\ell$-group $G$ with positive cone $G^{+}$ to be a
function $\tau:G^{+}\rightarrow G^{+}$ with the following properties.

\begin{enumerate}
\item[$\left(  \tau_{1}\right)  $] $x\wedge\tau\left(  y\right)  \leq
\tau\left(  x\right)  \leq x$ for all $x,y\in G^{+}$.

\item[$\left(  \tau_{2}\right)  $] If $x\in G^{+}$ and $\tau\left(  x\right)
=0$, then $x=0$.

\item[$\left(  \tau_{3}\right)  $] If $x\in G^{+}$ and $nx=\tau\left(
nx\right)  $ for all $n\in\left\{  1,2,...\right\}  $, then $x=0$.
\end{enumerate}

\noindent For an $\ell$-group $G$ with a truncation $\tau$, we put%
\[
\tau\left(  G^{+}\right)  =\left\{  \tau\left(  x\right)  :x\in G^{+}\right\}
.
\]
It is shown in \cite{BE2017} that the cardinal sum $G\oplus\mathbb{Q}$ can be
endowed with an ordering such that $G\oplus\mathbb{Q}$ is an $\ell$-group with
$1$ as a weak order unit and $G$ as an $\ell$-ideal (i.e., a convex $\ell
$-subgroup). The positive cone of $G\oplus\mathbb{Q}$ is the set%
\[
G^{+}\cup\left\{  x+p:p>0\text{ and }\frac{1}{p}x^{-}\in\tau\left(
G^{+}\right)  \right\}
\]
(where $x^{-}$ denotes the negative part of $x$). In \cite{BHM2018}, the
$\ell$-group $G\oplus\mathbb{Q}$ is called the \textsl{Alexandroff
unitization} of $G$. For brevity, the Alexandroff unitization of $G$ resulting
from the truncation $\tau$ will be denoted by $\tau G$.

The present paper is developed around the following problem (a look, from a
different angle, at a similar problem can be found in \cite{HJ2007} by Hager
and Johnson). Let $R$ be an $\ell$-ring with a truncation $\tau$. Drawing
plenty of inspiration from the the classical unitization process in Banach
Algebra Theory (see, e.g., \cite{BD1973}), a natural multiplication can be
introduced on the $\ell$-group $\tau R=R\oplus\mathbb{Q}$ by putting%
\begin{equation}
\left(  x+p\right)  \left(  y+q\right)  =xy+qx+py+pq\text{,\quad for all
}x,y\in R\text{ and }p,q\in\mathbb{Q}. \tag{$\ast$}\label{E1}%
\end{equation}
It is routine to check that this multiplication makes $\tau R$ into an
associative ring with $1$ as identity and $R$ as a ring ideal. It would seem
plausible to think that $\tau R$ is even an $\ell$-ring, i.e., the positive
cone of $\tau R$ is closed under the multiplication defined in (\ref{E1}).
Nevertheless, as the next example shows, such an attractive result cannot be
expected without imposing an extra compatibility condition.

\begin{example}
\label{Exp}Let $C\left(  \mathbb{R}\right)  $ denote the $\ell$-group of all
real-valued continuous functions on the real line $\mathbb{R}$. Clearly, the
function $\tau:C\left(  \mathbb{R}\right)  ^{+}\rightarrow C\left(
\mathbb{R}\right)  ^{+}$ defined by%
\[
\tau\left(  x\right)  \left(  r\right)  =\min\left\{  x\left(  r\right)
,1\right\}  \text{,\quad for all }x\in C\left(  \mathbb{R}\right)  \text{ and
}r\in\mathbb{R}%
\]
is a truncation on $C\left(  \mathbb{R}\right)  $. Moreover, it is easily
checked that $C\left(  \mathbb{R}\right)  $ is an $\ell$-ring under the
multiplication given by%
\[
\left(  xy\right)  \left(  r\right)  =2x\left(  r\right)  y\left(  r\right)
\text{,\quad for all }x,y\in C\left(  \mathbb{R}\right)  \text{ and }%
r\in\mathbb{R}.
\]
Define $x,y\in C\left(  \mathbb{R}\right)  $ by%
\[
x\left(  r\right)  =\cos r\text{\quad and\quad}y\left(  r\right)  =\sin
r\text{,\quad for all }r\in\mathbb{R}.
\]
Obviously, we have $x^{-},y^{-}\in\tau\left(  C\left(  \mathbb{R}\right)
^{+}\right)  $ and so $x+1,y+1\geq0$ in $\tau C\left(  \mathbb{R}\right)  $.
Furthermore, a simple calculation leads to the equalities%
\[
\left(  x+1\right)  \left(  y+1\right)  =xy+x+y+1\text{ and }\left(
xy+x+y\right)  ^{-}\left(  -\pi/4\right)  =2.
\]
It follows that%
\[
\left(  xy+x+y\right)  ^{-}\notin\tau\left(  C\left(  \mathbb{R}\right)
^{+}\right)  .
\]
This shows that $\left(  x+1\right)  \left(  y+1\right)  $ is not positive in
$\tau C\left(  \mathbb{R}\right)  $, which reveals that the multiplication
defined in \emph{(\ref{E1})} does not make $\tau C\left(  \mathbb{R}\right)  $
into an $\ell$-ring.
\end{example}

Thus, we have to reword the question as follows. What does an $\ell$-ring $R$
with a truncation still lack in order to produce a satisfactory outcome? The
main purpose of this paper is to look for necessary and sufficient conditions
on $R$ for $\tau R$ to be an $\ell$-ring. In this regard, we shall prove,
among other facts, that $\tau R$ is an $\ell$-ring if and only if $R$ is an
almost $f$-ring such that%
\[
\tau\left(  R^{+}\right)  =\left\{  x\in R:x^{2}\leq x\right\}  .
\]
To prove this equivalence, we shall show that if $\tau R$ is an $\ell$-ring,
then it is automatically a reduced $f$-ring. Also, we shall prove, as a
consequence of the main result, that any $\ell$-ring has at most one
truncation $\tau$ such that $\tau R$ is an $f$-ring.

Particular attention will be paid to the Archimedean case. First, recall that
any reduced Archimedean $f$-ring $R$ can be embedded as an $\ell$-subring in
the unital Archimedean $f$-ring $\mathrm{Orth}\left(  R\right)  $ of all
orthomorphisms on $R$ (see, e.g., \cite[Theorem 12.3.8]{BKW1977}). Following
\cite{BBE2014} (see also \cite{HP1986,HP1984}), we call the reduced
Archimedean $f$-ring $R$ a \textsl{Stone }$f$\textsl{-ring} if%
\[
\mathrm{id}_{R}\wedge x\in R,\text{\quad for all }x\in R^{+},
\]
where $\mathrm{id}_{R}$ denotes the identity map on $R$ (which is the identity
of the ring $\mathrm{Orth}\left(  R\right)  $). As an application of our
aforementioned results, we shall prove that if $R$ is an Archimedean $\ell
$-ring with a truncation $\tau$, then $\tau R$ is an $\ell$-ring if and only
if $R$ is a Stone $f$-ring and $\tau$ is given by%
\[
\tau\left(  x\right)  =\mathrm{id}_{R}\wedge x,\text{\quad for all }x\in
R^{+}.
\]
Finally, we refer the reader to the classical monographs \cite{BKW1977,D1995}
for unexplained terminology and notation.

\section{Preliminaries on $\ell$ -groups with truncation}

This short section is a summation of the recent paper \cite{BE2017} on $\ell$
-groups with truncation. More precisely, we shall collect results from this
reference that are relevant to our present work.

First, it could be useful to emphasize that any $\ell$-group under
consideration is assumed to be divisible and abelian. Moreover, we recall for
convenience that a \textsl{truncation} on an $\ell$-group $G$ is a function
$\tau:G^{+}\rightarrow G^{+}$ such that

\begin{enumerate}
\item[$\left(  \tau_{1}\right)  $] $x\wedge\tau\left(  y\right)  \leq
\tau\left(  x\right)  \leq x$ for all $x,y\in G^{+}$.

\item[$\left(  \tau_{2}\right)  $] If $x\in G^{+}$ and $\tau\left(  x\right)
=0$ then $x=0$.

\item[$\left(  \tau_{3}\right)  $] If $x\in G^{+}$ and $nx=\tau\left(
nx\right)  $ for all $n\in\left\{  1,2,...\right\}  $ then $x=0$.
\end{enumerate}

\noindent Throughout the paper, the range of the truncation $\tau$ is denoted
by $\tau\left(  G^{+}\right)  $, that is,%
\[
\tau\left(  G^{+}\right)  =\left\{  \tau\left(  x\right)  :x\in G^{+}\right\}
.
\]
One may prove that

\begin{enumerate}
\item[$\left(  \upsilon_{1}\right)  $] $\tau\left(  G^{+}\right)  =\left\{
x\in G^{+}:\exists y\in G^{+},x\leq\tau\left(  y\right)  \right\}  =\left\{
x\in G^{+}:\tau\left(  x\right)  =x\right\}  $.
\end{enumerate}

\noindent A positive element $e$ in an $\ell$-group $G$ with a truncation
$\tau$ is called a\textsl{ unit} \textsl{for} $\tau$ if $\tau$ is given by
meet with $e$, i.e.,%
\[
\tau\left(  x\right)  =e\wedge x,\quad\text{for all }x\in G^{+}.
\]
In this situation, $e$ is automatically a weak unit in $G$ (this follows
straightforwardly from $\left(  \tau_{2}\right)  $). It is worth noting that,
in general, an $\ell$-group $G$ with a truncation $\tau$ does not contain a
unit for $\tau$ (see Example 1.3 in \cite{BHM2018}).

At this point, we focus on the unitization of an $\ell$-group $G$ with a
truncation $\tau$. The Cartesian product $G\times\mathbb{Q}$ is a divisible
group with respect to the coordinatewise addition. Moreover, it is not hard to
see that $G$ can be identified with the subgroup $G\times\left\{  0\right\}  $
of $G\times\mathbb{Q}$. Therefore, we may assume that $G$ is a subgroup of
$G\times\mathbb{Q}$. Similarly, we may identify $\mathbb{Q}$ with $\left\{
0\right\}  \times\mathbb{Q}$ and so $\mathbb{Q}$ can also be seen as a
subgroup of $G\times\mathbb{Q}$. Actually, we have the cardinal sum%
\[
G\times\mathbb{Q}=G\oplus\mathbb{Q}=\left\{  x+p:x\in G\text{ and }%
p\in\mathbb{Q}\right\}  .
\]
Accordingly, each element $v$ in $G\oplus\mathbb{Q}$ can be uniquely written
as a sum of an element of $G$ and a rational number. It turns out that
$G\oplus\mathbb{Q}$ can be endowed with an ordering such that the following
properties hold.

\begin{enumerate}
\item[$\left(  \upsilon_{2}\right)  $] $G\oplus\mathbb{Q}$ is an $\ell$-group
with positive cone the set%
\[
G^{+}\cup\left\{  x+p\in G\oplus\mathbb{Q}:p>0\text{ and }\frac{1}{p}x^{-}%
\in\tau\left(  G^{+}\right)  \right\}
\]

\item[$\left(  \upsilon_{3}\right)  $] The positive part of an element $x+p\in
G\oplus\mathbb{Q}$ is given by%
\[
\left(  x+p\right)  ^{+}=\left\{
\begin{array}
[c]{l}%
x^{+}-p\tau\left(  \dfrac{1}{p}x^{-}\right)  +p,\text{\quad if }p>0\\
\\
x^{+}+p\tau\left(  \dfrac{-1}{p}x^{+}\right)  ,\text{\quad if }p<0.
\end{array}
\right.
\]

\item[$\left(  \upsilon_{4}\right)  $] $G\oplus\mathbb{Q}$ has $1$ as a
positive weak unit (i.e., if $v\in G\oplus\mathbb{Q}$ with $\left\vert
v\right\vert \wedge1=0$, then $v=0$).

\item[$\left(  \upsilon_{5}\right)  $] $\tau\left(  x\right)  =1\wedge x$ for
all $x\in G^{+}$.

\item[$\left(  \upsilon_{6}\right)  $] $G$ is an $\ell$-ideal (i.e., a convex
$\ell$-subgroup) in $G\oplus\mathbb{Q}$.
\end{enumerate}

\noindent For proofs of $\left(  \upsilon_{2}\right)  -\left(  \upsilon
_{6}\right)  $, the reader can consult \cite{BE2017}. Moreover, from $\left(
\upsilon_{2}\right)  $ it follows quickly that

\begin{enumerate}
\item[$\left(  \upsilon_{7}\right)  $] if $x\in G$ and $p\in\mathbb{Q}$ such
that $x+p\geq0$ in $G\oplus\mathbb{Q}$, then $p\geq0$.
\end{enumerate}

\noindent We end this section by noting that the $\ell$-group $G\oplus
\mathbb{Q}$ is called in \cite{BHM2018} the \textsl{Alexandroff unitization}
of $G$. We shall denote it by $\tau G$ throughout the paper because we will
often need to point out that it results from the truncation $\tau$.

\section{Results on $\ell$-rings}

In this section, we collect some fundamental properties on $\ell$-rings we
will need in what follows. All given rings are assumed to be divisible with
associative multiplication, but multiplication need not be commutative and
there need not be a multiplicative identity element. So, rings we are dealing
with are associative algebras (in the sense \cite{BD1973}) over the rationals
$\mathbb{Q}$. A ring $R$ is called a \textsl{lattice-ordered ring} (an $\ell
$-\textsl{ring} in short) if its underlying group is an $\ell$-group such that
the positive cone $R^{+}$ is closed under multiplication, i.e.,%
\[
xy\in R^{+}\text{,}\quad\text{for all }x,y\in R^{+}.
\]
The $\ell$-ring $R$ is said to be \textsl{unital} if it has an identity $e$,
i.e., $ex=xe=x$. The $\ell$-ring $R$ is said to be \textsl{reduced} (or,
\textsl{semiprime}) if $R$ contains no nonzero nilpotent elements. It is an
easy exercise to show that the $\ell$-ring $R$ is reduced if and only if
$x^{2}=0$ in $R$ implies $x=0$. We call the $\ell$-ring $R$ an \textsl{almost
}$f$\textsl{-ring} after Birkhoff in \cite{B1967} if%
\[
xy=0\text{,}\quad\text{for all }x,y\in R\text{ with }x\wedge y=0.
\]
Actually, the $\ell$-ring $R$ is an almost $f$-ring if and only if%
\[
x^{+}x^{-}=0\text{,}\quad\text{for all }x\in R
\]
(see Proposition 1..3 in \cite{BH1990}). Hence, if $R$ is an almost $f$-ring
then%
\[
x^{2}=\left\vert x\right\vert ^{2}=\left\vert x^{2}\right\vert \text{,}%
\quad\text{for all }x\in R.
\]
This means in particular that any almost $f$-ring has positive squares. For
instance, if the almost $f$-ring $R$ has an identity $e$ then $e\in R^{+}$.
The following proposition will play a key role in the proof of the main result
of this paper. For the proof, see Theorem 1.9 in \cite{BH1990} or Theorem 15
in \cite{BP1956}. First, we have to notice that by a \textsl{weak unit} in the
$\ell$-group $G$ we shall mean an element $e\in G$ such that $0<e$ and, for
every $x\in G$, if $\left\vert x\right\vert \wedge e=0$, then $x=0$.

\begin{proposition}
\label{BP}Any $\ell$-ring with an identity $e>0$ is an almost $f$-ring if and
only if $e$ is a weak unit.
\end{proposition}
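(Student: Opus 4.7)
The plan is to prove the two implications separately. The forward direction is immediate from the definitions, while the converse carries the real content of the statement.

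For the forward direction, assume $R$ is an almost $f$-ring with identity $e>0$. If $x\in R$ satisfies $|x|\wedge e=0$, then the almost $f$-ring property applied to the disjoint pair $|x|,e\in R^{+}$ forces $|x|\cdot e=0$. Since $e$ is the multiplicative identity, this reads $|x|=0$, hence $x=0$. So $e$ is a weak unit.

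For the converse, assume $e$ is a weak unit. I aim to show that $a,b\in R^{+}$ with $a\wedge b=0$ forces $ab=0$ (and $ba=0$ by symmetry). Since $ab\geq 0$ and $e$ is a weak unit, it suffices to prove $(ab)\wedge e=0$. My first step would be a truncated version: for any $n,m\geq 1$, the positive elements $a\wedge ne$ and $b\wedge me$ are disjoint since $(a\wedge ne)\wedge(b\wedge me)\leq a\wedge b=0$, and each is bounded by a multiple of $e$. Using order-compatibility of multiplication by positive elements, one obtains
\[
(a\wedge ne)(b\wedge me)\leq(a\wedge ne)\cdot me=m(a\wedge ne)\leq ma,
\]
and symmetrically $(a\wedge ne)(b\wedge me)\leq nb$, hence $(a\wedge ne)(b\wedge me)\leq ma\wedge nb=0$ by the standard $\ell$-group fact (via the Riesz inequality) that $a\wedge b=0$ implies $ma\wedge nb=0$. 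So $(a\wedge ne)(b\wedge me)=0$ for all $n,m\geq 1$.

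The main obstacle is upgrading this truncated identity to $ab=0$, since no Archimedean hypothesis is available and one cannot form $\sup_n(a\wedge ne)=a$. A direct attempt expands $ab$ via the $\ell$-group decomposition $a=(a\wedge ne)+(a-ne)^{+}$, and similarly for $b$, into four terms, of which the purely truncated one vanishes but the remaining three pair a truncated factor with a tail $(a-ne)^{+}$ or $(b-me)^{+}$ that is again disjoint from $b$ or $a$; a naive recursion does not terminate. My cleanest route would therefore be to invoke the classical Birkhoff--Pierce subdirect representation of an $\ell$-ring with a positive identity weak unit as a subdirect product of totally ordered rings, which is the content of \cite[Theorem 15]{BP1956} and also \cite[Theorem 1.9]{BH1990}. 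In each totally ordered factor, $a\wedge b=0$ forces one of $a,b$ to be zero, so $ab$ vanishes componentwise and therefore globally in $R$.
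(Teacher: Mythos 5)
Your forward direction is correct and complete: if $|x|\wedge e=0$, the almost $f$-ring property gives $|x|=|x|e=0$, so $e$ is a weak unit. For comparison, note that the paper gives no proof of this proposition at all; it simply cites \cite[Theorem 1.9]{BH1990} and \cite[Theorem 15]{BP1956}, which state precisely the equivalence in question.

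The converse, however, has a genuine gap exactly at the point where you hand the argument to the literature. The results you invoke are not a representation theorem: \cite[Theorem 1.9]{BH1990} (and, per the paper, \cite[Theorem 15]{BP1956}) assert the equivalence between ``$e$ is a weak unit'' and ``the ring is an almost $f$-ring'' --- the very statement to be proved --- so, as written, your converse is circular; and the content you attribute to them is in fact strictly stronger than the proposition. By Birkhoff--Pierce, being a subdirect product of totally ordered rings is exactly the property of being an $f$-ring, whereas an $\ell$-ring whose identity is a weak unit is asserted here only to be an almost $f$-ring; almost $f$-rings need not be $f$-rings (the paper recalls the example on page 62 of \cite{BP1956}), and the paper itself must add further hypotheses --- reducedness, obtained from the absence of nonzero infinitesimals --- to upgrade almost $f$ to $f$ (Proposition \ref{BH} and Lemma \ref{diem}). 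Moreover, ``$e$ is a weak unit'' is not a property inherited by the factors of a subdirect decomposition, so there is no cheap argument of that shape either. Your direct computation correctly yields $(a\wedge ne)(b\wedge me)=0$, but, as you acknowledge, it stalls at the real difficulty (passing from the truncated products to $ab=0$ with no Archimedean hypothesis); the step that bridges this gap is exactly the cited theorem itself, so the honest way to finish --- matching the paper --- is to quote the equivalence from \cite{BH1990} or \cite{BP1956} as stated, not a totally ordered subdirect representation, which those theorems do not provide and for which you give no argument.
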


Birkhoff and Pierce in \cite{BP1956} call the $\ell$-ring $R$ an
$f$-\textsl{ring} if%
\[
x\wedge y=0\text{ and }z\geq0\quad\text{imply\quad}\left(  xz\right)  \wedge
y=\left(  zx\right)  \wedge y=0.
\]
Obviously, any $f$-ring is an almost $f$-ring, but not conversely (see Example
on page 62 in \cite{BP1956}). It follows that $f$-rings enjoy all properties
of almost $f$-rings. The proof of the following result can be found in
\cite[Theorem 1.11]{BH1990}.

\begin{proposition}
\label{BH} A reduced $\ell$-ring is an $f$-ring if and only if it an almost
$f$-ring.
\end{proposition}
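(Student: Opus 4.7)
The forward direction (every $f$-ring is an almost $f$-ring) is already pointed out just before the proposition, so the whole content is in the converse: a reduced almost $f$-ring is an $f$-ring. My plan is to fix $x,y\in R$ with $x\wedge y=0$ and $z\in R^{+}$, and to show directly that $u:=(xz)\wedge y=0$ (the symmetric statement $(zx)\wedge y=0$ being proved the same way).

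The idea is to bootstrap from the almost $f$-ring identity "disjoint elements annihilate one another'' together with reducedness. First I would observe that $u\geq 0$ (since $xz\geq 0$ and $y\geq 0$). From $u\leq y$ and $x\wedge y=0$, standard lattice arithmetic gives
\[
0\leq u\wedge x \leq y\wedge x=0,
\]
so $u\wedge x=0$. Invoking the almost $f$-ring hypothesis on the disjoint pair $u,x$ yields $ux=xu=0$. This is the crucial consequence: $u$ kills $x$ on either side.

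Next I would exploit $u\leq xz$ by multiplying on the left by $u\geq 0$. Since multiplication by a positive element preserves the order, $u(xz-u)\geq 0$, hence $u^{2}\leq u(xz)$. By associativity $u(xz)=(ux)z=0\cdot z=0$, and on the other hand $u^{2}\geq 0$ because an almost $f$-ring has positive squares (recorded in the paragraph on almost $f$-rings just before the statement). Therefore $u^{2}=0$, and since $R$ is reduced this forces $u=0$. Replacing the inequality $u\leq xz$ by $u\leq zx$ and multiplying on the right instead of the left gives $(zx)\wedge y=0$ by the same argument.

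I do not expect a real obstacle here; the only delicate point is making sure one is allowed to push positive factors through inequalities and through meets, which is immediate from the $\ell$-ring axiom that $R^{+}$ is closed under multiplication. The key structural inputs are exactly the three items already in hand: the characterization of almost $f$-rings via annihilation of disjoint elements, the positivity of squares in any almost $f$-ring, and the assumption that $R$ is reduced.
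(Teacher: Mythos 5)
Your proof is correct, and it differs from the paper in an essential way: the paper does not prove this proposition at all, it simply cites Theorem 1.11 of Bernau--Huijsmans \cite{BH1990}. Your argument is a short, self-contained replacement. The chain of steps checks out: with $u=(xz)\wedge y$ one has $u\geq 0$ (both $xz$ and $y$ are positive, since $x\wedge y=0$ forces $x,y\geq 0$), then $0\leq u\wedge x\leq y\wedge x=0$, so the almost $f$-ring hypothesis applied to the disjoint pair gives $ux=xu=0$; multiplying $u\leq xz$ on the left by $u\geq 0$ yields $u^{2}\leq u(xz)=(ux)z=0$, and since $u^{2}\geq 0$ (already because $u\geq 0$ and $R^{+}$ is closed under products --- you do not even need the positive-squares remark), reducedness gives $u=0$. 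The right-handed case $(zx)\wedge y$ goes through exactly as you say, using $xv=0$ and multiplying $v\leq zx$ on the right. Your version is noteworthy in that it stays entirely within the paper's stated generality (associative, not necessarily commutative, not necessarily unital $\ell$-rings) and uses only the three facts already recorded in the surrounding text: annihilation of disjoint elements, order-preservation of multiplication by positive elements, and reducedness. What the paper's citation buys is only brevity and a pointer to the broader theory of almost $f$-rings and $d$-rings in \cite{BH1990}; mathematically, your two-line computation would suffice in place of the reference.
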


The reader would realize that the manuscript \cite{BH1990} by Bernau and
Huijsmans will be of great use in this paper. It should be pointed out that
this reference deals with algebras rather than rings. But, as its authors
themselves observed (see Page 1 in \cite{BH1990}), all results and proofs
still work for rings.

We call after Ball in \cite{B2014} an element $x$ in an $\ell$-ring $R$ with
identity $e>0$ an \textsl{infinitesimal} if%
\[
n\left\vert x\right\vert \leq e\text{,}\quad\text{for all }n\in\left\{
1,2,...\right\}  .
\]
The next lemma will be useful for later purposes.

\begin{lemma}
\label{diem}Let $R$ be a unital $\ell$-ring such that its identity is
simultaneously a weak unit. If $R$ has no non-zero infinitesimals, then $R$ is
a reduced $f$-ring.
\end{lemma}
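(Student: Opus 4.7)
The plan is to leverage Propositions~\ref{BP} and~\ref{BH}. Since $R$ is unital with identity $e>0$ (weak units are positive by definition) and $e$ is a weak unit, Proposition~\ref{BP} immediately yields that $R$ is an almost $f$-ring. In view of Proposition~\ref{BH}, a reduced almost $f$-ring is automatically an $f$-ring, so the whole task reduces to proving that $R$ has no nonzero nilpotent elements.

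To prove reducedness I would take $x\in R$ with $x^{2}=0$. Because almost $f$-rings have positive squares, $|x|^{2}=x^{2}=0$, so I may replace $x$ by $|x|$ and assume $x\geq 0$ from the outset. The strategy is then to show that $x$ must be an infinitesimal, namely that $nx\leq e$ for every positive integer $n$, and invoke the standing hypothesis on $R$ to conclude $x=0$.

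Fix $n\geq 1$ and set $u=nx\wedge e$, so that $v=nx-u=(nx-e)^{+}$ and $w=e-u=(nx-e)^{-}$ satisfy $v\wedge w=0$. Applying the almost $f$-ring identity $vw=0$ and expanding via $ex=xe=x$ yields
\[
nx-u=nx\cdot u-u^{2}.
\]
Here I would exploit the fact that $(nx)^{2}=n^{2}x^{2}=0$: since $0\leq u\leq nx$ and $R^{+}$ is closed under multiplication, the element $nx(nx-u)=-nx\cdot u$ is positive, which forces $nx\cdot u\leq 0$; combined with $nx\cdot u\geq 0$, this gives $nx\cdot u=0$. Substituting back, $nx-u=-u^{2}$, whose left side is positive (as $u\leq nx$) and whose right side is negative (again by positivity of squares in an almost $f$-ring). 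Both sides must therefore vanish, so $nx=u\leq e$, establishing the infinitesimality of $x$.

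The main subtlety is the derivation of $nx\cdot u=0$ from $x^{2}=0$; everything else is a short manipulation once the auxiliary element $u=nx\wedge e$ has been isolated. Since multiplication in $R$ is not assumed commutative, I would also double-check that the symmetric identity $wv=0$, which produces $nx-u=u\cdot nx-u^{2}$, can be treated by the very same argument, so that no side-handedness issue arises.
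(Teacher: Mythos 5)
Your proof is correct, and it follows the paper's overall skeleton exactly: Proposition~\ref{BP} gives that $R$ is an almost $f$-ring, you reduce the problem to showing $R$ is reduced, prove that any square-zero element is an infinitesimal, and finish with Proposition~\ref{BH}. Where you genuinely diverge is in the key infinitesimality step. The paper dispatches it with a single identity: for $x^2=0$ it writes
\[
n\left\vert x\right\vert \leq 2n\left\vert x\right\vert =2n\left\vert x\right\vert -n^{2}x^{2}=e-\left(  n\left\vert x\right\vert -e\right)  ^{2}\leq e,
\]
using only positivity of squares in an almost $f$-ring, with no auxiliary decomposition. You instead introduce $u=nx\wedge e$, exploit the almost $f$-ring annihilation of the disjoint pair $(nx-e)^{+}$, $(nx-e)^{-}$, and use closedness of $R^{+}$ under multiplication to force $nx\cdot u=0$ and then $nx=u\leq e$. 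Your route is a bit longer but arguably more transparent about where the hypothesis is used (product of disjoint elements vanishes, plus cone positivity), and your closing remark about handedness is apt but, as you note, not needed since one of the two identities $vw=0$, $wv=0$ suffices; the paper's argument sidesteps the issue entirely since squares and the identity $e$ behave symmetrically. Both arguments are sound; the paper's is simply the more economical computation.
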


\begin{proof}
Let $e$ denote the identity of $R$. By Proposition \ref{BP}, the $\ell$-ring
$R$ is an almost $f$-ring and so has positive squares. Let $n\in\left\{
1,2,...\right\}  $ and $x\in R$ such that $x^{2}=0$. Hence, $n^{2}x^{2}=0$ and
so%
\[
n\left\vert x\right\vert \leq2n\left\vert x\right\vert =2n\left\vert
x\right\vert -n^{2}x^{2}=e-\left(  n\left\vert x\right\vert -e\right)
^{2}\leq e.
\]
We derive that $x$ is an infinitesimal in $R$ and thus $x=0$. This shows that
$R$ is reduced. In summary, $R$ is a reduced almost $f$-ring. This together
with Proposition \ref{BH} ends the proof of the lemma.
\end{proof}

The last result of this section is presumably known, though we have not been
able to locate a reference for it.

\begin{lemma}
\label{semi}Let $R$ be a reduced $f$-ring and $0\leq x\in R$. Then the
following are equivalent.

\begin{enumerate}
\item[\emph{(i)}] $x^{2}\leq x$.

\item[\emph{(ii)}] $xy\leq y$ for all $y\in R^{+}$.

\item[\emph{(iii)}] $yx\leq y$ for all $y\in R^{+}$.
\end{enumerate}
\end{lemma}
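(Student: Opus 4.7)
The implications (ii)$\Rightarrow$(i) and (iii)$\Rightarrow$(i) are immediate by substituting $y=x$. The real content lies in (i)$\Rightarrow$(ii) and (i)$\Rightarrow$(iii); these are mirror images of one another, so the plan is to carry out (i)$\Rightarrow$(ii) in detail and obtain (i)$\Rightarrow$(iii) by swapping the sides of multiplication throughout.

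Assume $x^{2}\leq x$ and pick $y\in R^{+}$. I will set $w=(xy-y)^{+}$ and aim to force $w=0$, which is equivalent to $xy\leq y$. The engine of the argument is the standard $f$-ring identity $a\cdot b^{+}=(ab)^{+}$ valid for $a\geq 0$, which expresses left distributivity of the product over $\vee$. Applying it with $a=x$ and $b=xy-y$ gives
\[
xw=\bigl(x(xy-y)\bigr)^{+}=\bigl((x^{2}-x)y\bigr)^{+}.
\]
Since $x^{2}-x\leq 0$ and $y\geq 0$, closure of $R^{+}$ under multiplication forces $(x^{2}-x)y\leq 0$, so $xw=0$.

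From this point on the argument is purely lattice-theoretic. In a reduced $f$-ring, $xw=0$ with $x,w\geq 0$ implies $x\wedge w=0$ (because $(x\wedge w)^{2}\leq xw=0$ and reducedness kills zero squares). The $f$-ring axiom then converts this disjointness into $(xy)\wedge w=0$, via multiplication of $x$ on the right by $y\geq 0$. On the other hand $w=(xy-y)^{+}\leq xy$ (since $xy-y\leq xy$ and $xy\geq 0$), so $w=w\wedge(xy)=0$, as required. The counterpart (i)$\Rightarrow$(iii) follows verbatim from $w=(yx-y)^{+}$ and the right-sided identity $b^{+}\cdot a=(ba)^{+}$ for $a\geq 0$. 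The main obstacle, conceptually, is the opening move of identifying $xw$ as the positive part of $(x^{2}-x)y$; once this is spotted, the rest is a routine unwinding of the $f$-ring axioms together with reducedness.
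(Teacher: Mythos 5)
Your proof is correct, and it takes a genuinely different route from the paper's. Where you set $w=(xy-y)^{+}$ and invoke the standard $f$-ring fact that left multiplication by a positive element is a lattice homomorphism (so $x\,(xy-y)^{+}=\bigl((x^{2}-x)y\bigr)^{+}=0$), then pass from the zero product $xw=0$ to disjointness $x\wedge w=0$ via $(x\wedge w)^{2}\leq xw$ and reducedness, and finish with the defining $f$-ring implication ($x\wedge w=0$, $y\geq 0$ $\Rightarrow$ $(xy)\wedge w=0$) together with $w\leq xy$, the paper argues more barehandedly: it first records that in a reduced ring $yz=0$ forces $zy=0$, then deduces $x\bigl[(xy-y)^{+}\bigr]^{2}=0$ from $x(xy-y)\leq 0$ and $(xy-y)^{-}(xy-y)^{+}=0$, commutes this zero product, and kills the cube $\bigl[(xy-y)^{+}\bigr]^{3}=0$ by reducedness. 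The trade-off: your argument is shorter and more conceptual, but it imports the distributivity identity $a\,b^{+}=(ab)^{+}$ for $a\geq 0$ (true in every $f$-ring, e.g. by two applications of the $f$-ring axiom or by the Birkhoff--Pierce representation, and a fact the paper itself quotes later), whereas the paper's computation uses nothing beyond the raw $f$-ring axiom, positivity of products, and reducedness. Your handling of the side-swapped implication (i)$\Rightarrow$(iii), and of the trivial implications via $y=x$, matches the paper's and is fine; note also that no commutativity is used anywhere, as required.
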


\begin{proof}
We start the proof with a preliminary observation. Let $y,z\in R^{+}$ such
that $yz=0$. Then,%
\[
\left(  zy\right)  ^{2}=zyzy=z\left(  yz\right)  y=0.
\]
Since $R$ is reduced, we get $zy=0$. Now, the implication
$\mathrm{(ii)\Rightarrow(i)}$ is trivial (and so is the implication
$\mathrm{(iii)\Rightarrow(i)}$). We only prove the implication
$\mathrm{(i)\Rightarrow(ii)}$ (the implication $\mathrm{(i)\Rightarrow(iii)}$
can be obtained in the same way).

Assume that $x^{2}\leq x$ and pick $y\in R^{+}$. Clearly, $x^{2}y-xy\leq0$ and
so $x\left(  xy-y\right)  \leq0$. Hence, using%
\[
\left(  xy-y\right)  ^{-}\left(  xy-y\right)  ^{+}=0,
\]
we get%
\[
0\leq x\left[  \left(  xy-y\right)  ^{+}\right]  ^{2}=x\left(  xy-y\right)
\left(  xy-y\right)  ^{+}\leq0.
\]
Thus,%
\[
x\left[  \left(  xy-y\right)  ^{+}\right]  ^{2}=0.
\]
In view of the preliminary observation, we derive that%
\[
\left[  \left(  xy-y\right)  ^{+}\right]  ^{2}x=0.
\]
Therefore,%
\[
0\leq\left[  \left(  xy-y\right)  ^{+}\right]  ^{3}=\left[  \left(
xy-y\right)  ^{+}\right]  ^{2}\left(  xy-y\right)  ^{+}\leq\left[  \left(
xy-y\right)  ^{+}\right]  ^{2}xy=0.
\]
Since $R$ is reduced, we obtain $\left(  xy-y\right)  ^{+}=0$. It follows that
$xy\leq y$, completing the proof of the lemma.
\end{proof}

\section{The Alexandroff unitization of an $\ell$-ring with a truncation}

The central purpose of this section is to give a complete answer to the
following question. When is the Alexandroff unitization $\tau R$ of an $\ell
$-ring $R$ with a truncation $\tau$ an $\ell$-ring? Our investigation starts
with the following proposition.

\begin{proposition}
\label{reduced}Let $R$ be an $\ell$-ring with a truncation $\tau$. Then $\tau
R$ is an $\ell$-ring if and only if $\tau R\ $is a reduced $f$-ring.
\end{proposition}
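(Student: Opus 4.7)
The ``if'' direction is immediate since any reduced $f$-ring is, a fortiori, an $\ell$-ring. For the converse, my plan is to verify the hypotheses of Lemma \ref{diem} applied to $\tau R$. The ring $\tau R$ is unital with identity $1$ by the construction (\ref{E1}), and $1$ is a weak unit by $(\upsilon_{4})$. Consequently it will suffice to prove that $\tau R$ contains no nonzero infinitesimal, for then Lemma \ref{diem} upgrades $\tau R$ from an $\ell$-ring to a reduced $f$-ring.

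To that end, suppose $v\in\tau R$ satisfies $n|v|\leq 1$ for every positive integer $n$. Since $v^{+},v^{-}\leq|v|$, both are positive infinitesimals, and I may reduce to the case of a positive $v$. Write $v=x+p$ with $x\in R$ and $p\in\mathbb{Q}$, and split according to the two clauses in the description of $(\tau R)^{+}$ given by $(\upsilon_{2})$. If $p>0$, then
\[
1-nv=-nx+(1-np)\geq 0 \text{ in } \tau R,
\]
which by $(\upsilon_{7})$ forces $1-np\geq 0$; choosing $n>1/p$ contradicts this, so this case cannot arise. If $p=0$, then $v=x\in R^{+}$ and $nx\leq 1$ rewrites as $-nx+1\geq 0$ in $\tau R$, whence $(\upsilon_{2})$ demands $(-nx)^{-}=nx\in\tau(R^{+})$. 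By $(\upsilon_{1})$ this is exactly $\tau(nx)=nx$ for every $n\geq 1$, and axiom $(\tau_{3})$ then yields $x=0$. Hence $v=0$, so $\tau R$ has no nonzero infinitesimal, and Lemma \ref{diem} concludes.

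The main subtlety is disposing of the $p>0$ case: without property $(\upsilon_{7})$ one could not a priori exclude a strictly positive rational contribution, and it is precisely this property that channels the problem into the purely $\ell$-group statement $(\tau_{3})$. Everything else is routine bookkeeping with the description of the positive cone and the definition of an infinitesimal.
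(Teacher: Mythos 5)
Your proof is correct and takes essentially the same route as the paper: both arguments come down to showing that $\tau R$ has no nonzero infinitesimals (using $\left(\upsilon_{7}\right)$ to kill the rational part and $\left(\tau_{3}\right)$ to kill the $R$-part) and then invoking Lemma \ref{diem}. Your reduction to a positive infinitesimal and the direct case split on the positive cone of $\left(\upsilon_{2}\right)$ merely streamline the paper's computation of $\left\vert x+p\right\vert$ via $\left(\upsilon_{3}\right)$; the substance is identical.
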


\begin{proof}
Only Necessity needs a proof. Assume that $\tau R$ is an $\ell$-ring. Since
$1$ is an identity and, simultaneously, a weak unit in $\tau R$, we conclude
that $\tau R$ is an almost $f$-ring (where we use Proposition \ref{BP}). In
particular, squares in $\tau R$ are positive. We claim that $R$ has no
non-trivial infinitesimals. To this end, let $x\in R$ and $p\in\mathbb{Q}$
such that%
\[
n\left\vert x+p\right\vert \leq1,\quad\text{for all }n\in\left\{
1,2,...\right\}  .
\]
From $\left(  \upsilon_{3}\right)  $ it follows quite easily that that, if
$p\neq0$, then%
\[
\left\vert x+p\right\vert =\left\vert x\right\vert -2\left\vert p\right\vert
\tau\left(  \frac{1}{p}x^{-}\wedge\frac{-1}{p}x^{+}\right)  +\left\vert
p\right\vert .
\]
Thus, $\left\vert x+p\right\vert $ is always of the form $y+\left\vert
p\right\vert $ for some $y\in R$. We derive that%
\[
0\leq-ny+\left(  1-n\left\vert p\right\vert \right)  ,\quad\text{for all }%
n\in\left\{  1,2,...\right\}  .
\]
But then%
\[
n\left\vert p\right\vert \leq1,\quad\text{for all }n\in\left\{
1,2,...\right\}
\]
(where we use $\left(  \upsilon_{7}\right)  $) and thus $p=0$. Whence,%
\[
n\left\vert x\right\vert \leq1,\quad\text{for all }n\in\left\{
1,2,...\right\}  .
\]
Consequently, $\left(  \upsilon_{5}\right)  $ leads to%
\[
n\left\vert x\right\vert =\tau\left(  n\left\vert x\right\vert \right)
,\quad\text{for all }n\in\left\{  1,2,...\right\}  .
\]
This together with $\left(  \tau_{3}\right)  $ yields that $x=0$. Accordingly,
$0$ is the only infinitesimal in $\tau R$. Using Lemma \ref{diem}, we infer
that $\tau R$ is reduced. In summary, $\tau R$ is a reduced almost $f$-ring.
In view of Proposition \ref{BH}, we derive that $\tau R$ is a reduced
$f$-ring, which ends the proof of the proposition.
\end{proof}

We are in position at this point to state and prove the central result of this work.

\begin{theorem}
\label{main}Let $R$ be an $\ell$-ring with a truncation $\tau$. Then $\tau R$
is an $\ell$-ring if and only if $R$ is a reduced $f$-ring with%
\[
\tau\left(  R^{+}\right)  =\left\{  x\in R:x^{2}\leq x\right\}  .
\]

\end{theorem}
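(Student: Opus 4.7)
My plan is to treat necessity and sufficiency as two independent arguments.

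For necessity, Proposition \ref{reduced} gives that $\tau R$ is a reduced $f$-ring, and since $R$ is an $\ell$-ideal of $\tau R$ by $(\upsilon_6)$, $R$ inherits the reduced $f$-ring property. The inclusion $\tau(R^+) \subseteq \{x \in R : x^2 \leq x\}$ is immediate: if $a \in \tau(R^+)$, then by $(\upsilon_5)$ we have $a = \tau(a) = 1 \wedge a$ in $\tau R$, so $a \leq 1$, whence multiplying by $a \geq 0$ in the $\ell$-ring $\tau R$ yields $a^2 \leq a$. The substantive direction is the reverse inclusion. Given $x \in R$ with $x^2 \leq x$ (so $x \geq x^2 \geq 0$), I set $y = (1 - x)^-$ and use $(\upsilon_3)$ to identify $y = x - \tau(x) \in R$. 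Disjointness of $(1-x)^+$ and $(1-x)^-$ in the almost $f$-ring $\tau R$ turns $(1-x) y$ into $-y^2$, yielding the identity $xy = y + y^2$. Iterating while using $x^2 \leq x$ (which forces $x^2 y \leq xy$ since $y \geq 0$), I get $y + 2y^2 + y^3 = x^2 y \leq xy = y + y^2$, hence $y^2 + y^3 \leq 0$. Positivity of $y, y^2, y^3$ and reducedness of $R$ then force $y = 0$, so $x \leq 1$ in $\tau R$ and $x \in \tau(R^+)$.

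For sufficiency, assume $R$ is a reduced $f$-ring with $\tau(R^+) = \{x : x^2 \leq x\}$, and let $u = x + p$, $v = y + q$ be positive in $\tau R$. By $(\upsilon_7)$, $p, q \geq 0$. When at most one of $p, q$ is positive, $uv$ is easily seen to lie in $R^+$: for $p > 0$, $q = 0$, one has $y \in R^+$ and $a := x^-/p \in \tau(R^+)$, so Lemma \ref{semi} yields $x^- y \leq py$ and hence $uv = x^+ y + (py - x^- y) \in R^+$. The main case is $p, q > 0$, where setting $a := x^-/p$ and $b := y^-/q$ (both in $\tau(R^+)$) and expanding $uv = (xy + qx + py) + pq$, the requirement $uv \geq 0$ in $\tau R$ becomes $\frac{1}{pq}(xy + qx + py)^- \in \tau(R^+)$. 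Invoking the Lemma \ref{semi} estimates $x^+ y^- \leq qx^+$ and $x^- y^+ \leq py^+$, a direct expansion gives
\[
(xy + qx + py) + pq(a + b - ab) = x^+ y^+ + (qx^+ - x^+ y^-) + (py^+ - x^- y^+) \geq 0,
\]
whence $(xy + qx + py)^- \leq pq(a + b - ab)$. By the downward closure property $(\upsilon_1)$ of $\tau(R^+)$, it therefore suffices to show $a + b - ab \in \tau(R^+)$.

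The principal obstacle is this final verification: that $(a + b - ab)^2 \leq a + b - ab$ for $a, b \in \tau(R^+)$. My plan is to represent the reduced $f$-ring $R$ as a subdirect product of totally ordered reduced $f$-rings; in each factor, $a$ and $b$ project to nonnegative elements whose squares do not exceed themselves, forcing them into $[0, 1]$ of the fraction field. There $a + b - ab = 1 - (1-a)(1-b) \in [0, 1]$, so $(a + b - ab)^2 \leq a + b - ab$ is transparent, and the inequality lifts back to $R$. A representation-free alternative is to expand $(a + b - ab)^2$ directly and iterate Lemma \ref{semi}, noting along the way that $ab \in \tau(R^+)$ because $(ab)^2 = abab \leq a(ab) = a^2 b \leq ab$; this is more computational but avoids the subdirect machinery.
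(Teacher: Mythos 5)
Your overall architecture matches the paper's: necessity via Proposition \ref{reduced} plus the observation that $0\leq x\leq 1$ gives $x^{2}\leq x$, and sufficiency via the case analysis on $p,q$, where your element $a+b-ab$ is exactly the paper's $\tfrac{1}{pq}v=\tfrac{1}{pq}\left(py^{-}+qx^{-}-x^{-}y^{-}\right)$ and your displayed identity is the paper's decomposition $xy+qx+py=u-v$ with $u\in R^{+}$. Your necessity argument for the inclusion $\{x:x^{2}\leq x\}\subseteq\tau\left(R^{+}\right)$ is a genuinely different (and correct) micro-argument: the paper simply applies Lemma \ref{semi} inside the reduced $f$-ring $\tau R$ and takes $y=1$ to get $x\leq 1$, whereas you compute with $y=(1-x)^{-}=x-\tau(x)$, derive $xy=y+y^{2}$ from $(1-x)^{+}(1-x)^{-}=0$, and force $y=0$; both are fine, the paper's being shorter.

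The genuine gap is the step you yourself flag as the principal obstacle: you never actually prove $a+b-ab\in\tau\left(R^{+}\right)$, and your primary plan for it does not work as stated. The paper explicitly does not assume commutativity, and while a reduced $f$-ring is indeed a subdirect product of totally ordered domains, such domains can be noncommutative and need not embed in a "fraction field" (nor even in a division ring), so the argument "$a,b\in[0,1]$, hence $1-(1-a)(1-b)\in[0,1]$" is not available in general; even in the commutative case this imports representation machinery the paper avoids. Your "representation-free alternative" is the right idea but is left unexecuted; the clean way (which is precisely the paper's concluding computation) is: for every $z\in R^{+}$, $bz\leq z$ gives $z-bz\geq 0$, hence $a(z-bz)\leq z-bz$, and therefore $(a+b-ab)z=bz+a(z-bz)\leq z$; since also $a+b-ab\geq a\geq 0$ (because $ab\leq b$), Lemma \ref{semi} (or taking $z=a+b-ab$) yields $(a+b-ab)^{2}\leq a+b-ab$, i.e.\ $a+b-ab\in\tau\left(R^{+}\right)$, and then $(\upsilon_{1})$ finishes. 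Note also a small omission: to pass from your displayed inequality to $(xy+qx+py)^{-}\leq pq\left(a+b-ab\right)$ you need $a+b-ab\geq 0$, which should be recorded (it follows from $ab\leq b$).
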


\begin{proof}
\textit{Necessity.} If $\tau R$ is an $\ell$-ring, then $\mathfrak{\tau}R$ is
a reduced $f$-ring (by Proposition \ref{reduced}). Thus, $R$ is a reduced
$f$-ring since $R$ is a subring and a sublattice of $\tau R$. Now, we prove
the equality%
\[
\tau\left(  R^{+}\right)  =\left\{  x\in R:x^{2}\leq x\right\}  .
\]
If $x\in\tau\left(  R^{+}\right)  $ then $0\leq x\leq1$ in $\tau R$ and so
$x^{2}\leq x$. Conversely, suppose that $x^{2}\leq x$. This inequality holds
in $\tau R$ which is an almost $f$-ring. It follows in particular that
$x\geq0$ because squares in an almost $f$-ring are positive. Hence, we can
apply Lemma \ref{semi} to write%
\[
xy\leq y\text{,\quad for all }0\leq y\in\tau R.
\]
We directly get $x\leq1$ and so $x\in\tau\left(  R^{+}\right)  $. This ends
the proof of Necessity.

\textit{Sufficiency}. Suppose that $R$ is a reduced $f$-ring such that%
\[
\tau\left(  R^{+}\right)  =\left\{  x\in R:x^{2}\leq x\right\}  .
\]
By Lemma \ref{semi}, we have%
\begin{equation}
\tau\left(  R^{+}\right)  =\left\{  x\in R^{+}:xy\leq y\text{ for all }y\in
R^{+}\right\}  \label{1}%
\end{equation}
and%
\begin{equation}
\tau\left(  R^{+}\right)  =\left\{  x\in R^{+}:yx\leq y\text{ for all }y\in
R^{+}\right\}  . \label{2}%
\end{equation}
We claim that $\tau R$ is an $\ell$-ring. To do this, it suffices to prove
that the positive cone of $\tau R$ is closed under multiplication. Choose
$x,y\in R$ and $p,q\in\mathbb{Q}$ such that $0\leq x+p$ and $0\leq y+q$ in
$\tau R$. We derive, by $\left(  \upsilon_{7}\right)  $, that $p,q\geq0$. We
must show that $\left(  x+p\right)  \left(  y+q\right)  $ is positive in $\tau
R$. There is nothing to prove if $p=q=0$. So, assume that $p>0$ and $q=0$.
Hence,%
\[
\frac{1}{p}x^{-}\in\tau\left(  R^{+}\right)  \quad\text{and\quad}y\in R^{+}.
\]
But then%
\[
\frac{1}{p}x^{-}y\leq y
\]
(where we use (\ref{1})) and so%
\[
\left(  x+p\right)  y=x^{+}y-x^{-}y+py=x^{+}y+p\left(  y-\frac{1}{p}%
x^{-}y\right)  \in R^{+}.
\]
Accordingly, $\left(  x+p\right)  y$ is positive in $\tau R$, as desired. The
case where $p=0$ and $q>0$ can be obtained in the same way. Suppose now that
$p>0$ and $q>0$. We write%
\[
\left(  x+p\right)  \left(  y+q\right)  =xy+py+qx+pq.
\]
The proof will be complete once we show that%
\[
\frac{1}{pq}\left(  xy+py+qx\right)  ^{-}\in\tau\left(  R^{+}\right)  .
\]
Clearly,%
\[
xy+py+qx=u-v,
\]
where%
\[
u=x^{+}y^{+}+py^{+}+qx^{+}-x^{+}y^{-}-x^{-}y^{+}%
\]
and%
\[
v=py^{-}+qx^{-}-x^{-}y^{-}.
\]
Moreover, since $x+p$ and $y+q$ are positive in $\tau R$, we have%
\begin{equation}
\frac{1}{p}x^{-},\frac{1}{q}y^{-}\in\tau\left(  R^{+}\right)  . \label{b}%
\end{equation}
We obtain, by (\ref{1}) and (\ref{2}),%
\[
\frac{1}{p}x^{-}y^{+}\leq y^{+}\quad\text{and\quad}\frac{1}{q}x^{+}y^{-}\leq
x^{+}.
\]
It follows that%
\[
py^{+}-x^{-}y^{+}\in R^{+}\quad\text{and\quad}qx^{+}-x^{+}y^{-}\in R^{+}.
\]
We derive that $u\in R^{+}$. Analogously, (\ref{2}) and (\ref{b}) imply that%
\[
\frac{1}{q}x^{-}y^{-}\leq x^{-}%
\]
and so%
\[
qx^{-}-x^{-}y^{-}\in R^{+}.
\]
This shows that $v\in R^{+}$. Therefore,%
\begin{equation}
\frac{1}{pq}\left(  xy+py+qx\right)  ^{-}=\frac{1}{pq}\left(  u-v\right)
^{-}\leq\frac{1}{pq}v. \label{c}%
\end{equation}
On the other hand, pick $z\in R^{+}$ and observe that%
\[
\frac{1}{q}y^{-}z\leq z,
\]
where we use (\ref{1}) and (\ref{b}). So, again by (\ref{1}) and (\ref{b}),%
\[
\frac{1}{p}x^{-}\left(  z-\frac{1}{q}y^{-}z\right)  \leq z-\frac{1}{q}y^{-}z.
\]
Accordingly,%
\begin{align*}
\frac{1}{pq}vz  &  =\frac{1}{q}y^{-}z+\frac{1}{p}x^{-}z-\frac{1}{pq}x^{-}%
y^{-}z\\
&  =\frac{1}{q}y^{-}z+\frac{1}{p}x^{-}\left(  z-\frac{1}{q}y^{-}z\right) \\
&  \leq\frac{1}{q}y^{-}z+z-\frac{1}{q}y^{-}z=z.
\end{align*}
This together with (\ref{1}) gives that%
\[
\frac{1}{pq}v\in\tau\left(  R^{+}\right)  .
\]
Taking into consideration $\left(  \upsilon_{1}\right)  $ and (\ref{c}), we
derive that%
\[
\frac{1}{pq}\left(  xy+py+qx\right)  ^{-}\in\tau\left(  R^{+}\right)  ,
\]
which allows us to conclude.
\end{proof}

In light of Theorem \ref{main}, we can find simpler examples (in comparison
with Example \ref{Exp}) of an $\ell$-ring $R$ with a truncation $\tau$ such
that $\tau R$ fails to be an $\ell$-ring. Indeed, consider $\mathbb{R}$ with
its usual $\ell$-ring structure and $\tau\left(  x\right)  =2\wedge x$ as a
truncation. Clearly, $\tau\left(  \mathbb{R}^{+}\right)  \neq\left\{
x\in\mathbb{R}:x^{2}\leq x\right\}  $ and so $\tau\mathbb{R}$ is not an $\ell
$-ring. As an alternative consequence of Theorem \ref{main}, we shall prove
that there exists at most one truncation on an $\ell$-ring $R$ such that $\tau
R$ is an $\ell$-ring. To do this, we need the following lemma.

\begin{lemma}
Let $G$ be an $\ell$-group. Two truncations $\tau_{1}$ and $\tau_{2}$ on $G$
coincide if and only if $\tau_{1}\left(  G^{+}\right)  =\tau_{2}\left(
G^{+}\right)  $.
\end{lemma}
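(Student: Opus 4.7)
The plan is to dispatch the forward direction in one line (if $\tau_1 = \tau_2$ then clearly $\tau_1(G^+) = \tau_2(G^+)$) and concentrate on the converse. So suppose $\tau_1(G^+) = \tau_2(G^+)$, fix an arbitrary $x \in G^+$, and aim to show $\tau_1(x) = \tau_2(x)$ by a symmetric pair of inequalities.

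The key idea is to exploit property $(\upsilon_1)$, which characterizes the range of a truncation as the set of its fixed points (equivalently, as the downset of its image). Since $\tau_1(x) \in \tau_1(G^+) = \tau_2(G^+)$, there exists $w \in G^+$ with $\tau_1(x) = \tau_2(w)$. I would then apply axiom $(\tau_1)$ to $\tau_2$, replacing $y$ by this $w$, which gives
\[
x \wedge \tau_2(w) \leq \tau_2(x), \quad \text{i.e.,} \quad x \wedge \tau_1(x) \leq \tau_2(x).
\]
By $(\tau_1)$ applied to $\tau_1$ itself, $\tau_1(x) \leq x$, so $x \wedge \tau_1(x) = \tau_1(x)$, yielding $\tau_1(x) \leq \tau_2(x)$. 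Swapping the roles of $\tau_1$ and $\tau_2$ gives the reverse inequality, and we conclude $\tau_1(x) = \tau_2(x)$.

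There is no real obstacle here; the only thing to notice is that a truncation is already forced by how its range interacts with $(\tau_1)$, and that axioms $(\tau_2)$ and $(\tau_3)$ play no part in the argument. The entire content of the lemma is the characterization of $\tau(G^+)$ from $(\upsilon_1)$ combined with the meet inequality in $(\tau_1)$.
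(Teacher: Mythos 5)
Your proof is correct and is essentially the paper's argument in compressed form: the paper derives $\tau_1(x)\leq\tau_2(x)$ by noting $\tau_2(\tau_1(x))\leq\tau_2(x)$ (monotonicity, itself a consequence of the meet inequality in $(\tau_1)$) together with $\tau_2(\tau_1(x))=\tau_1(x)$ from $(\upsilon_1)$, while you apply the axiom $(\tau_1)$ directly with a preimage witness $w$ satisfying $\tau_2(w)=\tau_1(x)$. One small remark: what you actually use is just the definition of the range $\tau_2(G^+)$, not the fixed-point characterization $(\upsilon_1)$ you cite, which if anything makes your route marginally more self-contained.
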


\begin{proof}
Only Sufficiency needs a proof. Suppose that $\tau_{1}\left(  G^{+}\right)
=\tau_{2}\left(  G^{+}\right)  $ and pick $x\in G^{+}$. We have to show that
$\tau_{1}\left(  x\right)  =\tau_{2}\left(  x\right)  $. On the one hand, we
have $\tau_{1}\left(  x\right)  \leq x$ and so $\tau_{2}\left(  \tau
_{1}\left(  x\right)  \right)  \leq\tau_{2}\left(  x\right)  $. On the other
hand, since $\tau_{1}\left(  x\right)  \in\tau_{1}\left(  G^{+}\right)
=\tau_{2}\left(  G^{+}\right)  $, we get $\tau_{2}\left(  \tau_{1}\left(
x\right)  \right)  =\tau_{1}\left(  x\right)  $. It follows that $\tau
_{1}\left(  x\right)  \leq\tau_{2}\left(  x\right)  $. Similarly, $\tau
_{2}\left(  x\right)  \leq\tau_{1}\left(  x\right)  $ and thus $\tau_{1}%
=\tau_{2}$, which is the desired equality.
\end{proof}

The next result turns out to be useful for a later purpose.

\begin{corollary}
\label{unique}Let $R$ be $\ell$-ring and $\tau_{1},\tau_{2}$ be two
truncations on $R$ such that both Alexandroff unitizations $\tau_{1}R$ and
$\tau_{2}R$ are $\ell$-rings. Then $\tau_{1}=\tau_{2}$.
\end{corollary}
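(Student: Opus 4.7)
The plan is to combine Theorem \ref{main} with the preceding lemma characterizing when two truncations on an $\ell$-group agree. Theorem \ref{main} gives a completely intrinsic description of the range of the truncation in terms of the multiplicative structure of $R$ alone; the preceding lemma says that a truncation on an $\ell$-group is determined by its range. Putting these together forces $\tau_1$ and $\tau_2$ to be equal.

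More concretely, I would first apply Theorem \ref{main} twice: since $\tau_1 R$ is an $\ell$-ring, one has
\[
\tau_1(R^+) = \{x \in R : x^2 \leq x\},
\]
and similarly, since $\tau_2 R$ is an $\ell$-ring,
\[
\tau_2(R^+) = \{x \in R : x^2 \leq x\}.
\]
The right-hand side depends only on the $\ell$-ring structure of $R$, not on the choice of truncation, so $\tau_1(R^+) = \tau_2(R^+)$. Then I would invoke the lemma immediately preceding the corollary, which states that two truncations on an $\ell$-group having the same range coincide, to conclude $\tau_1 = \tau_2$.

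There is essentially no obstacle, since all the hard work has already been done in Theorem \ref{main} and in the range-determines-truncation lemma. The only thing one must be careful about is to note that the characterization $\{x \in R : x^2 \leq x\}$ is formulated purely inside $R$, and does not secretly depend on which unitization one is sitting inside, so that the same set is produced on both sides. This is clear because the inequality $x^2 \leq x$ is tested entirely within $R$, whose $\ell$-ring structure is fixed independently of $\tau_1$ or $\tau_2$.
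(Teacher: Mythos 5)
Your proposal is correct and follows exactly the paper's own argument: apply Theorem \ref{main} to each truncation to see that both ranges equal $\{x\in R: x^{2}\leq x\}$, then conclude by the lemma that a truncation is determined by its range. Nothing further is needed.
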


\begin{proof}
By Theorem \ref{main}, we have%
\[
\tau_{1}\left(  G^{+}\right)  =\left\{  x\in G^{+}:x^{2}\leq x\right\}
=\tau_{2}\left(  G^{+}\right)  .
\]
The rest follows straightforwardly from the previous lemma.
\end{proof}

We are indebted to the referee for pointing out to us the following
interesting remark. Pick $c>0$ in $\mathbb{Q}$ and let $G$ be an $\ell$-group
with a truncation $\tau$. The map $\left(  x,p\right)  \rightarrow\left(
x,cp\right)  $ is a group automorphism of $G\oplus\mathbb{Q}$. Since the usual
unitization $\tau G$ of $G$ is known to be an $\ell$-group, we derive quickly
that the union%
\[
G^{+}\cup\left\{  x+p\in G\oplus\mathbb{Q}:p>0\text{ and }\frac{c}{p}x^{-}%
\in\tau\left(  G^{+}\right)  \right\}
\]
forms a positive cone on $G\oplus\mathbb{Q}$ under which it becomes an $\ell
$-group, denoted by $\left(  c\tau\right)  G$. Slight modifications of the
proof of Theorem \ref{main} yield, for an $\ell$-ring $R$ with a truncation
$\tau$, that $\left(  c\tau\right)  R$ is an $\ell$-ring if and only if $R$ is
a reduced $f$-ring with $\tau\left(  R^{+}\right)  =\left\{  x\in R:x^{2}\leq
cx\right\}  $. This could extend the applicability of our original unitization construction.

\section{The Archimedean case}

The aim of this section is to investigate the Alexandroff unitization of an
Archimedean $\ell$-ring with truncation. To begin with, we recall that an
$\ell$-group $G$ is said to be \textsl{Archimedean} if%
\[
\inf\left\{  \frac{1}{n}x:n=1,2,...\right\}  =0,\text{\quad for all }x\in
G^{+}.
\]
Clearly, any $\ell$-subgroup of an Archimedean $\ell$-group is again
Archimedean. In view of Axiom $\left(  \tau_{3}\right)  $, one might think
that any $\ell$-group with a truncation is automatically Archimedean. The next
example shows that this is not true.

\begin{example}
The Euclidean plane $G=\mathbb{R}^{2}$ is a totally-ordered $\ell$-group with
respect to coordinatewise addition and lexicographic ordering. It is readily
checked that the function $\tau:G^{+}\rightarrow G^{+}$ defined by%
\[
\tau\left(  \left(  r,s\right)  \right)  =\left(  0,1\right)  \wedge\left(
r,s\right)  ,\text{\quad for all }\left(  r,s\right)  \in G^{+}%
\]
is a truncation on $G$. But $G$ is not Archimedean.
\end{example}

The following Transfer's Type Theorem may well not have been quite on the
agenda, but we think that it could has some interest.

\begin{theorem}
Let $G$ be a $\ell$-group with a truncation $\tau$. Then $G$ is Archimedean if
and only if $\tau G$ is Archimedean.
\end{theorem}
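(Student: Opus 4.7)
The plan is to handle the two implications separately. The forward direction is essentially free: by $\left(  \upsilon_{6}\right)  $, $G$ sits as an $\ell$-subgroup of $\tau G$, and the Archimedean property is inherited by $\ell$-subgroups.

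For the converse, assume $G$ is Archimedean and take $w,v\in\left(  \tau G\right)  ^{+}$ with $nw\leq v$ for every $n\geq1$; the goal is to show $w=0$. Write $w=y+q$ and $v=x+p$ with $y,x\in G$ and $p,q\in\mathbb{Q}$. Applying $\left(  \upsilon_{7}\right)  $ to $v-nw\geq0$ forces $p-nq\geq0$ for all $n$, so $q=0$ and $w=y\in G^{+}$. Since $v\leq x^{+}+p$ in $\tau G$, we may replace $x$ by $x^{+}$ and assume $x\geq 0$. If $p=0$ then $ny\leq x$ holds in $G$ (because the order on $G$ is the restriction of that on $\tau G$), and the Archimedeanness of $G$ yields $y=0$ directly. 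If $p>0$, rescaling by $1/p$ reduces the problem to the normalized form: $x,y\in G^{+}$ and $ny\leq x+1$ in $\tau G$ for all $n\geq 1$.

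The heart of the argument is to approximate $ky$ from below by elements of $\tau\left(  G^{+}\right)  $, for each fixed $k\geq 1$. For $m\geq 1$, set
\[
b_{m}=\tfrac{1}{m}(mky-x)^{+}.
\]
Using divisibility together with the identity $(a-c)^{+}=a-a\wedge c$, one rewrites
\[
b_{m}=ky-ky\wedge(x/m).
\]
Since $G$ is Archimedean, $x/m\downarrow 0$ in $G$, whence $ky\wedge(x/m)\downarrow 0$, and therefore $b_{m}\uparrow ky$ in $G$. On the other hand, $mky\leq x+1$ in $\tau G$ gives $(mky-x)^{+}\leq 1$ in $\tau G$, so $b_{m}\leq 1$ in $\tau G$; by $\left(  \upsilon_{5}\right)  $ this means $\tau(b_{m})=b_{m}$. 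Applying $\left(  \tau_{1}\right)  $ with the pair $(ky,b_{m})$ and using $b_{m}\leq ky$ we obtain
\[
b_{m}=ky\wedge b_{m}=ky\wedge\tau(b_{m})\leq\tau(ky).
\]
Taking the supremum over $m$ in $G$ yields $ky\leq\tau(ky)$, hence $\tau(ky)=ky$ for every $k\geq 1$. Axiom $\left(  \tau_{3}\right)  $ then forces $y=0$, finishing the argument.

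The main obstacle is the transfer of information from $\tau G$ back to $G$: the hypothesis bounds $\{nw\}$ only inside $\tau G$, whose Archimedeanness is precisely what we are trying to prove. The averages $b_{m}$ form the missing bridge, since they live in $G$, are already controlled in $\tau G$ by $1$ (and thus lie in $\tau(G^{+})$ via $\left(  \upsilon_{5}\right)  $), and climb up to $ky$ thanks to the Archimedeanness of $G$ applied at the infimum $\inf_{m}(x/m)=0$.
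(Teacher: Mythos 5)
Your proof is correct, and although its skeleton matches the paper's (the ``if'' direction by inheritance of Archimedeanness to $\ell$-subgroups, then elimination of the rational part of the bounded element, then a finish via $(\tau_{3})$), the essential case is handled by a genuinely different mechanism. The paper iterates the bound to obtain $n\left(mx-q\right)^{+}\leq y^{+}$ for all $n,m$ and then uses the explicit positive-part formula $(\upsilon_{3})$ to recognize $\left(mx-q\right)^{+}=mx-q\tau\left(\frac{m}{q}x\right)$ as an element of $G$, so that Archimedeanness of $G$ (in its ``bounded multiples'' form) forces $\tau\left(\frac{m}{q}x\right)=\frac{m}{q}x$; you avoid $(\upsilon_{3})$ entirely, instead building the approximants $b_{m}=\frac{1}{m}\left(mky-x\right)^{+}=ky-ky\wedge\frac{1}{m}x$, which lie in $\tau\left(G^{+}\right)$ by $(\upsilon_{5})$, satisfy $b_{m}\leq\tau\left(ky\right)$ by $(\tau_{1})$, and increase to $ky$ by Archimedeanness of $G$ used in its infimum form, giving $\tau\left(ky\right)=ky$. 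Both routes then invoke $(\tau_{3})$. Your reduction is also organized a bit differently and arguably more cleanly: one application of $(\upsilon_{7})$ to $v-nw\geq0$ kills $q$ in all cases, and the normalization $x\mapsto x^{+}$, $p\mapsto1$ sets up the main computation, whereas the paper's $q=0$ case appeals to convexity $(\upsilon_{6})$. One small point you should make explicit: to conclude $q=0$ from $p-nq\geq0$ for all $n$ you also need $q\geq0$, which follows from $w\geq0$ and $(\upsilon_{7})$.
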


\begin{proof}
Since $G$ is an $\ell$-subgroup of $\tau G$, the `if' part is obvious. To
prove the `only if' part, assume that $G$ is Archimedean. We claim that $\tau
G$ is Archimedean as well. To this end, pick $x,y\in G$ and $p,q\in\mathbb{Q}$
such that the inequality%
\[
0\leq n(x+p)\leq y+q
\]
holds in $\tau G$ for all $n\in\{1,2,...\}$. Using $\left(  \upsilon
_{7}\right)  $, we get $p\geq0$ and $q\geq0$. If $q=0$ then
\[
0\leq n(x+p)\leq y,\text{\quad for all }n\in\{1,2,...\}.
\]
This together with $\left(  \upsilon_{6}\right)  $ yields that $p=0$.
Therefore,
\[
0\leq nx\leq y,\text{\quad for all }n\in\{1,2,...\}.
\]
Since $G$ is Archimedean, we derive that $x=0$. Now, suppose that $q>0$.
Hence,
\[
0\leq y-nx+q-np,\text{\quad for all }n\in\{1,2,...\}.
\]
By $\left(  \upsilon_{2}\right)  $, we obtain%
\[
0\leq np\leq q,\text{\quad for all }n\in\left\{  1,2,...\right\}  ,
\]
so $p=0$. Accordingly,%
\[
0\leq y-nx+q,\text{\quad for all }n\in\{1,2,...\},
\]
from which it follows that%
\[
0\leq y-nmx+nq,\text{\quad for all }n,m\in\{1,2,...\}.
\]
Thus,%
\[
n\left(  mx-q\right)  \leq y,\text{\quad for all }n,m\in\{1,2,..\}.
\]
Consequently,%
\[
n\left(  mx-q\right)  ^{+}\leq y^{+},\text{\quad for all }n,m\in\{1,2,...\}.
\]
Using $(\upsilon_{3})$, we derive that
\[
0\leq n\left(  mx-q\tau\left(  \dfrac{m}{q}x\right)  \right)  \leq y^{+}\in
G,\text{\quad for all }n,m\in\{1,2,...\}.
\]
But then%
\[
\tau\left(  \dfrac{m}{q}x\right)  =\dfrac{m}{q}x,\text{\quad for all }%
m\in\{1,2,...\}
\]
because $G$ is Archimedean. This implies that $x=0$ (where we use $(\tau_{3}%
)$) and ends the proof of the theorem.
\end{proof}

Let's come back to the main problem of this section, namely, studying
Archimedean $f$-rings with truncation. We first have to recall some of the
relevant notions. Let $G$ be an Archimedean $\ell$-group. In \cite{CD1971},
Conrad and Diem call a group endomorphism $\psi$ on $G$ a $p$%
-\textsl{endomorphism} on $G$ if%
\[
x,y\in G\text{ and }x\wedge y=0\text{\quad imply\quad}\left(  \psi x\right)
\wedge y=0.
\]
Observe that any $p$-endomorphism $\psi$ on $G$ is \textsl{positive}, i.e.,%
\[
\psi x\in G^{+},\text{\quad for all }x\in G^{+}.
\]
A group endomorphism $\pi$ on $G$ is called \textsl{orthomorphism} on $G$ if
$\pi=\varphi-\psi$ for some $p$-endomorphisms $\varphi$ and $\psi$ on $G$. The
set of all orthomorphisms on $G$ is denoted by $\mathrm{Orth}\left(  G\right)
$. It is well known that $\mathrm{Orth}\left(  G\right)  $ is a Archimedean
$\ell$-group with respect to the pointwise addition and ordering. Moreover,
the composition operation makes $\mathrm{Orth}\left(  G\right)  $ into an
Archimedean $f$-ring with identity $\mathrm{id}_{G}$, where%
\[
\mathrm{id}_{G}x=x,\text{\quad for all }x\in G.
\]
For the basic properties of orthomorphisms on $\ell$-groups, we refer to the
books \cite{BKW1977,D1995}.

Now, let $R$ be an Archimedean $f$-ring. Notice in passing that, being
Archimedean, the $f$-ring $R$ is commutative (see, e.g., Th\'{e}or\`{e}me
12.3.2 in \cite{BKW1977}). For every $x\in R$, we define a map $\pi
_{x}:R\rightarrow R$ by putting%
\[
\pi_{x}y=xy,\text{\quad for all }y\in R.
\]
Obviously,%
\[
\pi_{x}\in\mathrm{Orth}\left(  R\right)  \text{ for all }x\in R.
\]
Hence, we may introduce a map $J:R\rightarrow\mathrm{Orth}\left(  R\right)  $
by putting%
\[
Jx=\pi_{x},\text{\quad for all }x\in R.
\]
Actually, $J$ is a lattice and ring homomorphism. The range of $J$ will be
denoted by $J\left(  R\right)  $, that is,%
\[
J\left(  R\right)  =\left\{  Jx:x\in R\right\}  .
\]
Hence, $J\left(  R\right)  $ is an $f$-subring of $\mathrm{Orth}\left(
R\right)  $. Furthermore, $J$ is injective (respectively, bijective) if and
only if $R$ is reduced (respectively, has an identity). In particular, if $R$
is reduced (respectively, has an identity) then $R$ and $J\left(  R\right)  $
(respectively, $\mathrm{Orth}\left(  R\right)  $) are isomorphic as $f$-rings.
From now on, we shall identify any Archimedean reduced $f$-ring $R$ with
$J\left(  R\right)  $ and so $R$ will be seen as an $f$-subring of
$\mathrm{Orth}\left(  R\right)  $. These observations will be used below
without further mention. But if the reader wants to look at the proofs, he can
consult \cite[Th\'{e}or\`{e}me 12.3.8 and Corollaire 12.3.13]{BKW1977}.

The following lemma is a consequence of Theorem \ref{main}. It will be used to
prove the next result.

\begin{lemma}
\label{original}Let $R$ be a Archimedean $\ell$-ring with a weak unit $e$.
Then $R$ is an $f$-ring with identity $e$ if and only if%
\[
\left\{  x\in R^{+}:x^{2}\leq x\right\}  =\left\{  x\in R:0\leq x\leq
e\right\}  .
\]

\end{lemma}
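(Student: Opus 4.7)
The forward direction is a quick consequence of Lemma \ref{semi}. Assuming $R$ is an $f$-ring with identity $e$, Proposition \ref{BP} guarantees that $e$ is a weak unit; since $R$ is Archimedean there are no non-zero infinitesimals, so Lemma \ref{diem} makes $R$ a reduced $f$-ring. For $0 \leq x \leq e$, multiplying on the right by $x \geq 0$ yields $x^2 \leq xe = x$; conversely, if $x \geq 0$ and $x^2 \leq x$, Lemma \ref{semi} gives $x = xe \leq e$.

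For the converse, assume the set equality. I would introduce the truncation $\tau \colon R^+ \to R^+$ defined by $\tau(x) = e \wedge x$, verifying the axioms $(\tau_1)$--$(\tau_3)$ directly from $e$ being a weak unit and $R$ being Archimedean. Its range $\tau(R^+) = \{x \in R : 0 \leq x \leq e\}$ coincides, by hypothesis, with $\{x \in R^+ : x^2 \leq x\}$, which is exactly the equality featuring in Theorem \ref{main}. The plan is then to invoke Theorem \ref{main}: once $\tau R$ is known to be an $\ell$-ring, $R$ is automatically a reduced $f$-ring, and $e$ is pinned down as the identity in the following way. Inside $\tau R$, formula $(\upsilon_{3})$ yields $(s + e - 1)^+ = s$ for every $s \in R^+$, whence $(1 - e) \wedge s = 0$ in $\tau R$. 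The almost $f$-ring property of $\tau R$ turns this disjointness into $(1 - e) s = 0$, which via $(\ref{E1})$ reads $s - es = 0$, that is, $es = s$; commutativity of the Archimedean $f$-ring $\tau R$ then gives $se = s$ as well, so $e$ is the identity of $R$.

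The main obstacle is verifying, from the set equality alone, that $\tau R$ is an $\ell$-ring (equivalently, by Theorem \ref{main}, that $R$ is a reduced $f$-ring). Inspecting the sufficiency proof of Theorem \ref{main}, closure of the positive cone of $\tau R$ under the multiplication $(\ref{E1})$ hinges on establishing $zy \leq y$ and $yz \leq y$ for every $z \in [0, e]$ and $y \in R^+$. I would first handle the case $y \in [0, e]$ by exploiting the set equality on order combinations of $y$ and $z$ (for instance, $z^2 \leq z$ multiplied on the right by $y \geq 0$, paired with the positive part of $zy - y$) and then bootstrap to arbitrary $y \in R^+$ using the Archimedean identity $y = \sup_n (y \wedge ne)$, which allows the inequality to transfer from the rescaled truncations $(y \wedge ne)/n \in [0, e]$ back to $y$ itself. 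This combinatorial step is the crux of the argument.
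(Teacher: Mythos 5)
Your forward direction is correct and essentially the paper's (where the paper quotes that an Archimedean $f$-ring is reduced, you use Lemma \ref{diem}; both then apply Lemma \ref{semi} and conclude $x=xe\leq e$). Your closing step for the converse is also correct and in fact more self-contained than the paper's: from $(\upsilon_{3})$ one indeed gets $(s+e-1)^{+}=s$, hence $(1-e)\wedge s=0$ in $\tau R$ for every $s\in R^{+}$, and since $\tau R$ is an almost $f$-ring this yields $es=s$ and, by commutativity, $se=s$; the paper obtains the same disjointness by citing $R^{\perp}=\mathbb{Q}\left(1-e\right)$ from \cite{BHM2018}, having first reduced the converse, via Corollary 1.10 of \cite{BH1990}, to showing that $e$ is a multiplicative identity.

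The genuine gap is exactly the step you yourself label the crux, and it cannot be left as a sketch, because it carries the whole weight of the converse. To rerun the sufficiency argument of Theorem \ref{main} you must show, from the set equality alone, that $zy\leq y$ and $yz\leq y$ whenever $0\leq z\leq e$ and $y\in R^{+}$; at that stage $R$ is only an Archimedean $\ell$-ring, not known to be reduced nor an (almost) $f$-ring, so Lemma \ref{semi} is unavailable. Your first tactic, imitating the proof of Lemma \ref{semi} with $z^{2}y\leq zy$ and $(zy-y)^{+}$, fails because that proof rests on the identity $\left(zy-y\right)^{-}\left(zy-y\right)^{+}=0$ (an almost $f$-ring property) and on reducedness, neither of which you have here. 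Your second tactic, bootstrapping from $[0,e]$ to $R^{+}$ via $y=\sup_{n}\left(y\wedge ne\right)$, fails at the passage to the limit: from $z\left(y\wedge ne\right)\leq y\wedge ne\leq y$ you only get $(zy-y)^{+}\leq z(y-ne)^{+}$, and to conclude you would need $\inf_{n}z(y-ne)^{+}=0$, i.e. order continuity of multiplication by $z$, which a general Archimedean $\ell$-ring does not provide. Even the base case $z,y\in[0,e]$ is not established: applying the set equality to lattice and convex combinations of $y$ and $z$ only yields inequalities such as $zy+yz\leq(n+1)z+\left(1+\frac{1}{n}\right)y$, which fall short of $zy\leq y$. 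So, as written, the proposal does not prove that $\tau R$ is an $\ell$-ring, and the converse is incomplete; note for comparison that the paper does not attempt this combinatorial derivation at all, but passes through Corollary 1.10 of \cite{BH1990} and Theorem \ref{main}.
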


\begin{proof}
Assume that $R$ is an $f$-ring with $e$ as identity. Since $R$ is Archimedean,
it is reduced (see Theorem 1.11 (ii) in \cite{BH1990}). If $x\in R$ and $0\leq
x\leq e$ then, obviously, $x^{2}\leq x$. Conversely, choose $x\in R^{+}$ with
$x^{2}\leq x$. Since $R$ is reduced (see Corollaire 12.3.9 in \cite{BKW1977}),
Lemma \ref{semi} shows that $xy\leq y$ for all $y\in R$ and so $0\leq x\leq
e$. We have therefore proved Necessity. Let's prove Sufficiency. Assume that%
\[
\left\{  x\in R^{+}:x^{2}\leq x\right\}  =\left\{  x\in R:0\leq x\leq
e\right\}  .
\]
We have to show that $R$ is an $f$-ring with $e$ as identity. According to
Corollary 1.10 in \cite{BH1990}, it suffices to prove that $e$ is an identity
in $R$. It is readily checked that, putting%
\[
\tau\left(  x\right)  =e\wedge x\text{,\quad for all }x\in R^{+},
\]
we define a truncation $\tau$ on $R$. In particular, $R$ has $e$ as a unit for
the truncation $\tau$. Moreover, it follows directly from the hypothesis that%
\[
\tau\left(  R^{+}\right)  =\left\{  x\in R^{+}:x^{2}\leq x\right\}  .
\]
Thus, using Theorem \ref{main}, we derive that $\tau R$ is an $f$-ring. Now,
from Theorem 2.4 in \cite{BHM2018} it follows that $R^{\perp}=\mathbb{Q}%
\left(  1-e\right)  $, where $R^{\bot}$ is the polar of $R$ in $\tau R$. So,
the equality%
\[
\left\vert 1-e\right\vert \wedge\left\vert x\right\vert =0
\]
holds in $\tau R$ for all $x\in R$. But then $\left\vert 1-e\right\vert
\left\vert x\right\vert =0$ for every $x\in R$ because $\tau R$ is an
$f$-ring, and it follows that%
\[
xe=ex=x\text{,\quad for all }x\in R.
\]
This concludes the proof of the lemma
\end{proof}

Following \cite{BBE2014} (see also \cite{HP1986}), we call the Archimedean
reduced $f$-ring $R$ a \textsl{Stone }$f$-\textsl{ring} if%
\[
\mathrm{id}_{R}\wedge x\in R,\text{\quad for all }x\in R^{+}.
\]
For instance, any unital Archimedean $f$-ring is a Stone $f$-ring. Another
example is the $f$-ring $C_{0}\left(  \mathbb{R}\right)  $ of all continuous
real-valued functions on $\mathbb{R}$ that vanish at infinity. Indeed, it is
easily verified that $\mathrm{Orth}\left(  C_{0}\left(  \mathbb{R}\right)
\right)  $ can be identified with the $f$-ring $C^{\ast}\left(  \mathbb{R}%
\right)  $ of all bounded continuous real-valued functions on $\mathbb{R}$
(see \cite{Z1975}). This means in particular that any unital Archimedean
$f$-ring is a Stone $f$-ring. We call the \textsl{Stone function }on\textsl{
the Stone }$f$-ring $R$ the function $\tau:R^{+}\rightarrow R^{+}$ defined by%
\[
\tau\left(  x\right)  =\mathrm{id}_{R}\wedge x,\quad\text{for all }x\in
R^{+}.
\]
It is a very simple exercise to check that the Stone function on $R$ is a
truncation on $R$. The following lemma furnishes more information on Stone functions.

\begin{lemma}
\label{prop}The Stone function on a Stone $f$-ring $R$ is the unique
truncation $\tau$ on $R$ such that the Alexandroff unitization $\tau R$ of $R$
is an $\ell$-ring.
\end{lemma}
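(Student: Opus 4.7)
The plan is to reduce the statement to Theorem \ref{main} together with Corollary \ref{unique}. Let $\tau_{0}$ denote the Stone function on $R$, i.e., $\tau_{0}(x)=\mathrm{id}_{R}\wedge x$ for all $x\in R^{+}$. By hypothesis, $\tau_{0}$ is a truncation on $R$. By Theorem \ref{main}, showing that $\tau_{0}R$ is an $\ell$-ring amounts to verifying the two conditions: (a) $R$ is a reduced $f$-ring, and (b) $\tau_{0}\left(  R^{+}\right)  =\left\{  x\in R:x^{2}\leq x\right\}  $. Condition (a) is built into the definition of a Stone $f$-ring.

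To establish (b), the key idea is to pass to the ambient $f$-ring $\mathrm{Orth}\left(  R\right)  $, where $\mathrm{id}_{R}$ is a genuine ring identity (and hence a weak unit), and to use the identification of $R$ with the $f$-subring $J\left(  R\right)  $ of $\mathrm{Orth}\left(  R\right)  $. First, by property $\left(  \upsilon_{1}\right)  $, an element $x\in R^{+}$ lies in $\tau_{0}\left(  R^{+}\right)  $ if and only if $\tau_{0}\left(  x\right)  =x$, i.e., $x\leq\mathrm{id}_{R}$, giving
\[
\tau_{0}\left(  R^{+}\right)  =\left\{  x\in R^{+}:x\leq\mathrm{id}
_{R}\right\}  .
\]
Now Lemma \ref{original}, applied to the Archimedean unital $f$-ring $\mathrm{Orth}\left(  R\right)  $ with identity and weak unit $\mathrm{id}_{R}$, yields
\[
\left\{  u\in\mathrm{Orth}\left(  R\right)  ^{+}:u^{2}\leq u\right\}
=\left\{  u\in\mathrm{Orth}\left(  R\right)  :0\leq u\leq\mathrm{id}
_{R}\right\}  .
\]
Since multiplication, order, and squaring are all inherited from $\mathrm{Orth}\left(  R\right)  $, intersecting both sides with $R$ gives
\[
\left\{  x\in R:x^{2}\leq x\right\}  =\left\{  x\in R^{+}:x\leq\mathrm{id}
_{R}\right\}  =\tau_{0}\left(  R^{+}\right)  ,
\]
which is condition (b). Theorem \ref{main} then shows that $\tau_{0}R$ is an $\ell$-ring.

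For uniqueness, suppose $\tau$ is any truncation on $R$ for which $\tau R$ is an $\ell$-ring. Since both $\tau_{0}R$ and $\tau R$ are $\ell$-rings, Corollary \ref{unique} forces $\tau=\tau_{0}$, completing the proof.

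The main obstacle is the identity $\tau_{0}\left(  R^{+}\right)  =\left\{  x\in R:x^{2}\leq x\right\}  $; the rest consists of invoking already-proven machinery. The subtlety there is that $R$ itself need not be unital, so one cannot apply Lemma \ref{original} directly to $R$, and must instead work in the overring $\mathrm{Orth}\left(  R\right)  $, where $\mathrm{id}_{R}$ actually plays the role of the identity required by that lemma.
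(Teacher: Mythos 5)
Your proposal is correct and follows essentially the same route as the paper: both establish $\tau_0(R^+)=\{x\in R:x^2\leq x\}$ by viewing $R$ inside $\mathrm{Orth}(R)$ and invoking Lemma \ref{original} there (the paper checks the inclusion $\tau_0(R^+)\subseteq\{x:x^2\leq x\}$ directly by multiplying $0\leq x\leq\mathrm{id}_R$ by $x$, while you get both inclusions at once by intersecting with $R$ — a negligible difference), and then both conclude existence from Theorem \ref{main} and uniqueness from Corollary \ref{unique}. The only detail worth making explicit, which the paper also leaves implicit, is that $x^2\leq x$ forces $x\geq 0$ because squares are positive in an $f$-ring, so the set $\{x\in R:x^2\leq x\}$ is automatically contained in $R^+$.
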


\begin{proof}
We already pointed out that the Stone function on the Stone $f$-ring $R$ is a
truncation on $R$. Furthermore, if $x\in\tau\left(  R^{+}\right)  $ then%
\[
x=\tau\left(  x\right)  =\mathrm{id}_{R}\wedge x
\]
and so $0\leq x\leq\mathrm{id}_{R}$. Multiplying these inequalities by $x$, we
get $x^{2}\leq x$. Conversely, if $x^{2}\leq x$ then $x\leq\mathrm{id}_{R}$
(where we use Lemma \ref{original} in the $f$-ring $\mathrm{Orth}\left(
R\right)  $). In summary, we have%
\[
\tau\left(  R^{+}\right)  =\left\{  x\in R^{+}:x^{2}\leq x\right\}  .
\]
This together with Theorem \ref{main} yields that $\tau R$ is an $\ell$-ring.
Now, uniqueness follows straightforwardly from Corollary \ref{unique} and the
proof is complete.
\end{proof}

In order to prove the central theorem of this section, we need speak a little
about truncation homomorphisms (see \cite{B2014,BE2017} for more information).
Let $G_{1}$ and $G_{2}$ be two $\ell$-groups with truncations $\tau_{1}$ and
$\tau_{2}$, respectively. An $\ell$-homomorphism $h:G_{1}\rightarrow G_{2}$ is
called a \emph{truncation homomorphism} if $h$ preserves truncation, i.e.,
$\tau_{2}\circ h=h\circ\tau_{1}$. Assume now that $G_{k}$ contains a unit
$e_{k}$ for $\tau_{k}$ ($k=1,2$). The lattice homomorphism $h:G_{1}\rightarrow
G_{2}$ is said to be \emph{unital} if $he_{1}=e_{2}$. We easily check that any
unital lattice homomorphism is a truncation homomorphism.

\begin{theorem}
Let $R$ be an Archimedean $\ell$-ring with a truncation $\tau$. Then $\tau R$
is an $\ell$-ring if and only if $R$ is a Stone $f$-ring and $\tau$ is the
Stone function.
\end{theorem}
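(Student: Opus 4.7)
The `if' direction is the content of Lemma \ref{prop}, so the plan is to prove the `only if' direction: assume $\tau R$ is an $\ell$-ring, and deduce that $R$ is a Stone $f$-ring with $\tau$ its Stone function. By Proposition \ref{reduced}, $\tau R$ is a reduced $f$-ring, hence $R$, as an $\ell$-subring, is itself a reduced $f$-ring, and Theorem \ref{main} yields $\tau(R^+)=\{x\in R:x^{2}\leq x\}$. Since $R$ is Archimedean and reduced, I identify $R$ with its image $J(R)$ inside the unital Archimedean $f$-ring $\mathrm{Orth}(R)$, as in the discussion preceding the theorem.

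The goal then reduces to establishing the single identity
\[
\tau(x)=\mathrm{id}_{R}\wedge x\quad\text{in }\mathrm{Orth}(R),\text{ for every }x\in R^{+},
\]
because this immediately forces $\mathrm{id}_{R}\wedge x$ to lie in $R$ (so $R$ is a Stone $f$-ring) and expresses $\tau$ as the Stone function on $R$.

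To prove this identity, I would work inside $\tau R$, which has $1$ as multiplicative identity and, by Proposition \ref{reduced}, is an $f$-ring. Property $(\upsilon_{5})$ gives $\tau(x)=1\wedge x$ in $\tau R$ for $x\in R^{+}$, and the $f$-ring distributivity of meet over multiplication by a positive element yields, for every $y\in R^{+}$,
\[
\tau(x)\,y=(1\wedge x)\,y=y\wedge xy.
\]
Now translate this into $\mathrm{Orth}(R)$: the orthomorphism $\pi_{\tau(x)}=J(\tau(x))$ sends $y\in R^{+}$ to $\tau(x)y=y\wedge xy$, which coincides with $(\mathrm{id}_{R}\wedge \pi_{x})(y)$ because lattice operations in $\mathrm{Orth}(R)$ are computed pointwise on $R^{+}$. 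Since two orthomorphisms that agree on $R^{+}$ are equal, I conclude $\pi_{\tau(x)}=\mathrm{id}_{R}\wedge \pi_{x}$, which is the desired equality after identification via $J$.

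The main obstacle is essentially bookkeeping: one must match the identity $1$ of $\tau R$ with $\mathrm{id}_{R}\in\mathrm{Orth}(R)$ via the natural action of $\tau R$ on $R$, and verify that restricting orthomorphisms to $R^{+}$ loses no information. Both points rest on standard properties of $\mathrm{Orth}(R)$ together with the fact that $\tau R$ is an $f$-ring, which legitimizes the distributivity step $(1\wedge x)y=y\wedge xy$ that drives the whole argument.
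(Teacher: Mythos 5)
Your proposal is correct, and its necessity half is a genuinely more direct route than the one taken in the paper. The paper argues by cases according to whether $R$ contains a unit for $\tau$: in the unit case it combines Theorem \ref{main} with Lemma \ref{original} to conclude that $R$ is a unital Archimedean $f$-ring (hence identified with $\mathrm{Orth}(R)$), while in the non-unit case it endows $\mathrm{Orth}(R)$ with the truncation $\zeta(\pi)=\pi\wedge\mathrm{id}_{R}$, verifies that $J$ is a truncation homomorphism by exactly the computation you perform, and then invokes Corollary 3.2 of \cite{BHM2018} to extend $J$ to a unital lattice homomorphism $J^{\tau}:\tau R\rightarrow\mathrm{Orth}(R)$, finally obtaining that $\tau$ is the Stone function from the uniqueness statement of Lemma \ref{prop} via Corollary \ref{unique}. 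You bypass the case distinction and the extension machinery entirely: Proposition \ref{reduced} makes $\tau R$ a reduced $f$-ring, so $(\upsilon_{5})$ and the $f$-ring law $(u\wedge v)z=uz\wedge vz$ for $z\geq0$ give $\tau(x)y=y\wedge xy$ for all $y\in R^{+}$, and since lattice operations in $\mathrm{Orth}(R)$ of an Archimedean $\ell$-group are computed pointwise on the positive cone (a standard fact which the paper itself uses in its displayed evaluation of $\zeta(Jx)(y)$), the orthomorphisms $J(\tau(x))$ and $\mathrm{id}_{R}\wedge Jx$ agree on $R^{+}$ and hence coincide, because a group endomorphism is determined by its values on $R^{+}$. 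This yields in one stroke $\mathrm{id}_{R}\wedge x=\tau(x)\in R$, i.e., both the Stone property and the identification of $\tau$ with the Stone function, with no appeal to \cite{BHM2018} and no detour through Corollary \ref{unique}; what the paper's longer route buys is chiefly the additional structural information that $J$ is a truncation homomorphism admitting a one-to-one unital extension $J^{\tau}$ realizing $\tau R$ inside $\mathrm{Orth}(R)$, which is of independent interest, whereas your argument buys brevity and uniformity. The only points to make explicit in a final write-up are the two you already flag: the pointwise formula for $\wedge$ in $\mathrm{Orth}(R)$ (valid precisely because $R$ is Archimedean) and the determination of orthomorphisms by their restriction to $R^{+}$.
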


\begin{proof}
Sufficiency was proved in Lemma \ref{prop}. Let's establish Necessity by
assuming that $\tau R$ is an $\ell$-ring.

First, suppose that $R$ contains a unit $e>0$ for the truncation $\tau$. In
particular, we have%
\[
\tau\left(  R^{+}\right)  =\left\{  x\in R:0\leq x\leq e\right\}  .
\]
In view of Theorem \ref{main}, we get the equality%
\[
\left\{  x\in R^{+}:x^{2}\leq x\right\}  =\left\{  x\in R:0\leq x\leq
e\right\}  .
\]
Moreover, $e$ is a weak unit in $R$ and thus, by Lemma \ref{original}, we
deduce that $R$ is an Archimedean $f$-ring with identity $e$. But then $R$ can
be identified with $\mathrm{Orth}\left(  R\right)  $ and so $R$ is a Stone
$f$-ring. Uniqueness in Lemma \ref{prop} shows that $\tau$ is the Stone
function on $R$, indeed.

Now, assume that $R$ contains no unit for the truncation $\tau$. In view of
Uniqueness in Lemma \ref{prop}, the proof will be complete once we show that
$R$ is a Stone $f$-ring. By Proposition \ref{reduced}, $R$ is a reduced
$f$-ring. Furthermore, $\mathrm{Orth}\left(  R\right)  $ can directly be
equipped with a truncation $\zeta$ by putting%
\[
\zeta\left(  \pi\right)  =\pi\wedge\mathrm{id}_{R}\text{,\quad for all }\pi
\in\mathrm{Orth}\left(  R\right)  ^{+}.
\]
We claim that the embedding $J:R\rightarrow\mathrm{Orth}\left(  R\right)  $ is
a truncation homomorphism. To this end, recall that multiplications by
positive elements in an $f$-ring are $\ell$-homomorphisms. Now, choose $x\in
R^{+}$ and observe that if $y\in R^{+}$ then%
\[
J\left(  \tau\left(  x\right)  \right)  \left(  y\right)  =\tau\left(
x\right)  y=\left(  x\wedge1\right)  y=xy\wedge y
\]
(where we use $\left(  \upsilon_{5}\right)  $. Hence,%
\begin{align*}
\zeta\left(  Jx\right)  \left(  y\right)   &  =\left(  \mathrm{id}_{R}\wedge
Jx\right)  \left(  y\right)  =\left(  Jx\right)  \left(  y\right)  \wedge y\\
&  =xy\wedge y=J\left(  \tau\left(  x\right)  \right)  \left(  y\right)  .
\end{align*}
This means that $\zeta\circ J=J\circ\tau$ and so $J$ is a truncation
homomorphism, as desired. Thus, we may use Corollary 3.2 in \cite{BHM2018} to
infer that $J$ extends uniquely to a one-to-one unital lattice homomorphism
$J^{\tau}:\tau R\rightarrow\mathrm{Orth}\left(  R\right)  $. Choose $x\in
R^{+}$ and observe that%
\begin{align*}
\mathrm{id}_{R}\wedge Jx  &  =\mathrm{id}_{R}\wedge J^{\tau}x\\
&  =J^{\tau}1\wedge J^{\tau}x\\
&  =J^{\tau}\left(  1\wedge x\right)  =J^{\tau}\left(  \tau\left(  x\right)
\right) \\
&  =J\left(  \tau\left(  x\right)  \right)  =\tau\left(  x\right)  \in R.
\end{align*}
This shows that $R$ is a Stone $f$-ring, completing the proof of the theorem.
\end{proof}

\end{document}